\providecommand{\U}[1]{\protect\rule{.1in}{.1in}}
\newtheorem{theorem}{Theorem}
\newtheorem{corollary}[theorem]{Corollary}
\newtheorem{definition}[theorem]{Definition}
\newtheorem{remark}[theorem]{Remark}
\begin{document}

\title[A multiplicative measure on the positive real axis]{A multiplicative measure on \\ the positive real axis}
\author{Pablo Rocha}

\begin{abstract} We construct a measure $\mu$ and a $\sigma$-algebra $\mathcal{M}$ of subsets of the positive real axis,
$\mathbb{R}_{>0}$, with the following multiplicative property:
\[
\mu \left( \bigcup_j E_j \right) = \prod_j \mu(E_j)
\]
for every countable collection $\{ E_j \}$ of pairwise disjoint subsets of $\mathcal{M}$. For them, we apply the 
Carath\'eodory's procedure to the triplet $\left( \mathbb{R}_{>0}, \cdot \, , \tau \right)$, where $\cdot$ is the product of 
$\mathbb{R}$ and $\tau$ is the usual topology on $\mathbb{R}_{>0}$.

We conclude this note describing the connection between this multiplicative measure $\mu$ and the Lebesgue measure.

\end{abstract}
\maketitle

\section{Introduction}

Roughly speaking, the Carath\'eodory's procedure in constructing the Lebesgue measure in $\mathbb{R}$ is the following: first, it is built a outer measure on $\mathbb{R}$ from the length of the closed intervals of $\mathbb{R}$. Once the outer measure is constructed, the next step is to define the Lebesgue measurable sets. The collection of measurable subsets of $\mathbb{R}$ turns out to be a $\sigma$-algebra. Then the Lebesgue measure is defined as the outer measure restricted on the $\sigma$-algebra of Lebesgue measurable subsets of $\mathbb{R}$. Such a measure, say $m$, satisfies 
\[
m \left( \bigcup_j E_j \right) = \sum_j m(E_j)
\]
for every countable collection $\{ E_j \}$ of pairwise disjoint measurable sets. This pro-perty of $m$ is called countable additivity 
(see, e.g. \cite{zygmund}).

The purpose of this note is to construct a \textit{multiplicative measure} on the positive real axis $\mathbb{R}_{>0} = (0, +\infty)$. For them, we adapt the Carath\'eodory's procedure to our new setting. We consider the positive real axis $\mathbb{R}_{>0}$ with the usual topology and define the "length" of each closed interval $I=[a, b]$ in $\mathbb{R}_{>0}$ by $$\ell(I) = b \cdot a^{-1}.$$ Then, to apply the 
Carath\'eodory's idea with these elements, we will obtain a measure $\mu$ and a $\sigma$-algebra $\mathcal{M}$ in $\mathbb{R}_{>0}$ with the following properties:
\[
\mu(\emptyset) = 1, \,\,\,\,\,\,\, \mu((0,1)) = +\infty, \,\,\,\,\,\,\, \mu(\mathbb{R}_{>0}) = +\infty,
\]
\[
\mu \left( \bigcup_j E_j \right) = \prod_j \mu(E_j)
\]
for every countable collection $\{ E_j \}$ of pairwise disjoint sets of $\mathcal{M}$. We call to this property: 
\textit{countable multiplicativity}. By this reason, we say that $\mu$ is a \textit{multiplicative measure}.

Once constructed the measure $\mu$ and the $\sigma$-algebra $\mathcal{M}$, we will show that $\mathcal{M}$ coincides with the $\sigma$-algebra of the Lebesgue measurable subsets of $\mathbb{R}_{>0}$ and $\mu(E) = \lambda(E)$ for each $E \in \mathcal{M}$,
where
\[
\lambda(E) = \exp\left(\int_{E} \frac{1}{x} \, dx \right). 
\]

\

In Section 2 we give some preliminaries about single and double infinite products of positive real numbers which are necessary to our construction. The multiplicative measure $\mu$ is constructed in Section 3. Finally, in Section 4, we show that $\mu(E) = \lambda(E)$ for each $E \in \mathcal{M}$.

\

{\bf Notation.} The positive real axis will be denoted by $\mathbb{R}_{>0} = (0, +\infty)$ and  $\mathbb{R}_{\geq 1} = [1, +\infty)$.

\section{Preliminaries}

In this section we present some basic facts on certain single and double infinite products of real numbers (see, e.g. \cite{apostol}).

\subsection{Single infinite products} Since we will consider only infinite products with positive factors, the following
simplified definition will be adopted.

\

Given a sequence $\{ a_j \}$ of positive real numbers, let
\[
p_N = \prod_{j=1}^{N} a_j = a_1 \cdot a_2 \cdot \cdot \cdot a_N.
\]
The number $p_N$ is called the \textit{$N$th partial product} of the sequence $\{ a_j \}$.

\begin{definition} We say that an infinite product of positive real numbers $\displaystyle{\prod_{j=1}^{+\infty}} a_j$ converges if there exists a number $p >0$ such that

\[
\lim_{N \to +\infty} p_N = p.
\]
In this case, $p$ is called the value of the infinite product and we write $p = \displaystyle{\prod_{j=1}^{+\infty}} a_j$.
\end{definition}

\begin{remark} If $p = 0$ or $p = +\infty$ or the limit does not exist we say that the product diverges. A necessary
condition for convergence of an infinite product is $\displaystyle{\lim_{j \to +\infty}} a_j =1$, but this is not sufficient.
\end{remark}

\begin{remark}\label{suc mayor a 1} If $\{ a_j \}_{j=1}^{+\infty}$ is a sequence of real numbers greater than or equal to $1$, then
$1 \leq p_N \leq p_{N+1}$ for all $N \geq 1$. So, $\prod_{j=1}^{+\infty} a_j = \sup \{ p_N : N \in \mathbb{N}\} \in [1, +\infty]$. This is,
$\prod_{j=1}^{+\infty} a_j$ converges in $\mathbb{R}_{\geq 1}$ or $\prod_{j=1}^{+\infty} a_j = +\infty$.
\end{remark}

The following theorem on rearrangements is an immediate consequence of Theorem 3.3 in \cite{moorthy} and from remark \ref{suc mayor a 1}.

\begin{theorem}\label{theo rearr} If $\{ a_j \}_{j=1}^{+\infty}$ is a sequence of real numbers greater than or equal to $1$, then 
\[
\prod_{k=1}^{+\infty} a_{\sigma(k)} = \prod_{j=1}^{+\infty} a_{j}
\]
for each bijection $\sigma : \mathbb{N} \to \mathbb{N}$.
\end{theorem}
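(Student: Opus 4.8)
The plan is to reduce everything to the observation recorded in Remark \ref{suc mayor a 1}, namely that for a sequence of factors all $\geq 1$ the infinite product is just the supremum of its partial products taken in $[1,+\infty]$. Write $p_N = \prod_{j=1}^{N} a_j$ and $q_N = \prod_{k=1}^{N} a_{\sigma(k)}$. Both sequences are nondecreasing and bounded below by $1$ (all factors being $\geq 1$), so by Remark \ref{suc mayor a 1} we have $\prod_{j=1}^{+\infty} a_j = \sup_N p_N$ and $\prod_{k=1}^{+\infty} a_{\sigma(k)} = \sup_N q_N$, where both suprema are understood in $[1,+\infty]$. Hence it suffices to prove that these two suprema coincide.

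Next I would establish the inequality $\sup_N q_N \leq \sup_N p_N$. Fix $N$ and put $M = \max\{\sigma(1), \dots, \sigma(N)\}$. Since $\sigma$ is injective, $\{\sigma(1), \dots, \sigma(N)\}$ is a subset of $\{1, \dots, M\}$, and because every factor satisfies $a_j \geq 1$, throwing in the missing factors only enlarges the product; therefore $q_N = \prod_{k=1}^{N} a_{\sigma(k)} \leq \prod_{j=1}^{M} a_j = p_M \leq \sup_{M'} p_{M'}$. Taking the supremum over $N$ yields $\sup_N q_N \leq \sup_N p_N$, i.e. $\prod_{k=1}^{+\infty} a_{\sigma(k)} \leq \prod_{j=1}^{+\infty} a_{j}$.

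Finally, the reverse inequality follows by symmetry: the original sequence $\{a_j\}$ is obtained from $\{a_{\sigma(k)}\}$ by the bijection $\sigma^{-1}$, and $\{a_{\sigma(k)}\}$ is again a sequence of reals $\geq 1$, so the argument of the preceding paragraph applies verbatim and gives $\prod_{j=1}^{+\infty} a_j \leq \prod_{k=1}^{+\infty} a_{\sigma(k)}$. Combining the two inequalities proves the theorem. I do not expect a genuine obstacle here; the only delicate point is that the products may equal $+\infty$, which is precisely why it is convenient to run the whole argument through suprema in $[1,+\infty]$ rather than through limits of convergent products. This is exactly what Remark \ref{suc mayor a 1} supplies, and it also makes clear why the hypothesis $a_j \geq 1$ is essential — for general convergent products one must instead invoke absolute convergence, which is the content of Theorem 3.3 in \cite{moorthy}.
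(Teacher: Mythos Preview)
Your argument is correct. The reduction to suprema via Remark \ref{suc mayor a 1}, the comparison $q_N \leq p_M$ with $M=\max\{\sigma(1),\dots,\sigma(N)\}$, and the symmetry step through $\sigma^{-1}$ are all sound, and the treatment of the $+\infty$ case is handled cleanly by working in $[1,+\infty]$ throughout.

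The paper proceeds differently: it first remarks that the statement is a consequence of Theorem~3.3 in \cite{moorthy} together with Remark~\ref{suc mayor a 1}, and then gives a one--line alternative argument by taking logarithms, namely $\prod_j a_j = \exp\bigl(\sum_j \log a_j\bigr) = \exp\bigl(\sum_k \log a_{\sigma(k)}\bigr) = \prod_k a_{\sigma(k)}$, invoking the standard rearrangement theorem for series of nonnegative terms. Your approach is a genuine alternative: you stay entirely on the multiplicative side and argue directly with suprema of partial products, never passing through $\log$/$\exp$ or appealing to an external reference. What your route buys is self-containment (everything rests on Remark~\ref{suc mayor a 1} alone); what the paper's log trick buys is brevity, since it offloads the work to the well-known additive rearrangement result.
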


\

The proof of Theorem 4 is simpler. Indeed, for $a_j \geq 1$, we have that
\[
\prod_j a_j = \exp\left( \sum_j \log(a_j) \right) = \exp\left( \sum_k \log(a_{\sigma(k)}) \right) = \prod_j a_{\sigma(k)}. 
\] 

\subsection{Double infinite products}

Formally, a double infinite product of positive real numbers is of the form
\[
\prod_{i,j=1}^{+\infty} a_{ij}
\]
where the $a_{i j}$'s are indexed by  pair of natural numbers $i, j \in \mathbb{N}$.

\

A sequence of real numbers of the form $\{ a_{ij} \}$ is called a double sequence.

\

Our main interest is in giving a definition of a double infinite product of real numbers greater than or equal to $1$ that does not depend on the order of its factors. 

\

If $F \subset \mathbb{N} \times \mathbb{N}$ is a finite subset of pairs of natural numbers, then we denote by
\[
\prod_{F} a_{ij} = \prod_{(i,j) \in F} a_{ij}
\]
the partial product of all $a_{ij}$'s whose indices $(i,j)$ belong to $F$.

\begin{definition} The unordered infinite product of real numbers $a_{ij} \geq 1$ is
\[
\prod_{\mathbb{N} \times \mathbb{N}} a_{ij} = \sup_{F \in \mathcal{F}} \left\{  \prod_{F} a_{ij}  \right\}
\]
where the supremum is taken over the collection $\mathcal{F}$ of all finite subsets $F \subset \mathbb{N} \times \mathbb{N}$.
\end{definition}

Such an unordered infinite product converges if the supremum is finite and diverges to $+\infty$ if the supremum is $+\infty$.
Note that this supremum exists in $\mathbb{R}_{\geq 1}$ if and only if the finite partial products are bounded from above.

\

Next, we define rearrangements of a double infinite product into single infinite product and show that every rearrangement of a 
convergent unordered double infinite product converges to the same product.

\

\begin{definition} A rearrangement of a double infinite product of positive real numbers 
\[
\prod_{i,j}^{+\infty} a_{ij}
\]
is a single infinite product of the form
\[
\prod_{k=1}^{+\infty} b_k, \,\,\,\,\,\, b_k = a_{\sigma{(k)}}
\]
where $\sigma : \mathbb{N} \to \mathbb{N} \times \mathbb{N}$ is a one-to-one and onto map.
\end{definition}

The following result is an adaptation of Theorem 8.42 in \cite{apostol}, pp. 201, to the context of double products.

\begin{theorem}\label{theo rearr 2} If the unordered product of a double sequence of real numbers greater than or equal to $1$ converges,
then every rearrangement of the double infinite product into a single infinite product converges to the unordered product.
\end{theorem}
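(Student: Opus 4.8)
The plan is to exploit the monotonicity of partial products when all factors are $\geq 1$, which lets us replace limits by suprema and reduces the statement to a comparison of two suprema taken over cofinal families of finite index sets. Throughout, write $P = \prod_{\mathbb{N}\times\mathbb{N}} a_{ij} = \sup_{F\in\mathcal{F}}\prod_F a_{ij}$, which by hypothesis is finite (and $\geq 1$). Fix a bijection $\sigma:\mathbb{N}\to\mathbb{N}\times\mathbb{N}$, set $b_k = a_{\sigma(k)}$ and $q_N = \prod_{k=1}^N b_k$. Since each $b_k\geq 1$, Remark \ref{suc mayor a 1} gives that $\{q_N\}$ is nondecreasing and $\prod_{k=1}^{+\infty} b_k = \sup_N q_N \in [1,+\infty]$; so it suffices to prove $\sup_N q_N = P$.

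First I would check $\sup_N q_N \leq P$. For each $N$ the set $F_N := \sigma(\{1,\dots,N\})$ is a finite subset of $\mathbb{N}\times\mathbb{N}$ (of cardinality $N$, since $\sigma$ is injective), and $q_N = \prod_{F_N} a_{ij}$. By the definition of the unordered product, $\prod_{F_N} a_{ij}\leq P$, hence $q_N\leq P$ for every $N$ and therefore $\sup_N q_N\leq P$. In particular $\prod_{k=1}^{+\infty} b_k$ is finite and $\geq 1$, so the rearrangement converges in the sense of the definition.

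Next I would prove the reverse inequality $P\leq \sup_N q_N$. Given an arbitrary finite $F\subset\mathbb{N}\times\mathbb{N}$, surjectivity of $\sigma$ lets us choose $N = \max\{\sigma^{-1}(i,j):(i,j)\in F\}$, so that $F\subseteq F_N$. Since every factor satisfies $a_{ij}\geq 1$, adjoining the extra indices only enlarges the partial product, whence $\prod_F a_{ij}\leq \prod_{F_N} a_{ij} = q_N\leq \sup_N q_N$. Taking the supremum over all finite $F$ gives $P\leq \sup_N q_N$. Combining the two inequalities yields $\prod_{k=1}^{+\infty} b_k = \sup_N q_N = P$, which is the claim.

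There is no deep obstacle here; the only points needing care are (i) invoking Remark \ref{suc mayor a 1} to pass from the definition of convergence of an infinite product to an honest supremum, and (ii) using that $\sigma$ is \emph{onto} so that the finite sets $F_N$ are cofinal in $\mathcal{F}$, which is precisely what forces the two suprema to coincide. The hypothesis $a_{ij}\geq 1$ is essential in both inequalities — it is what makes $N\mapsto q_N$ monotone and what makes enlarging a finite index set increase the partial product — and the statement genuinely fails without it.
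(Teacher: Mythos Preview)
Your proof is correct and follows essentially the same approach as the paper: both define $F_N=\sigma(\{1,\dots,N\})$, use $\prod_{F_N}a_{ij}\le P$ for the upper bound, and use $N=\max\{\sigma^{-1}(i,j):(i,j)\in F\}$ together with $a_{ij}\ge 1$ for the lower bound. The only cosmetic difference is that the paper runs a direct $\epsilon$--$N$ limit argument, whereas you invoke Remark~\ref{suc mayor a 1} to replace the limit by a supremum and then compare the two suprema via cofinality of $\{F_N\}$ in $\mathcal{F}$.
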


\begin{proof} Suppose that the unordered product $\prod a_{ij}$, $a_{ij} \geq 1$, converges with
\[
\prod_{\mathbb{N} \times \mathbb{N}} a_{ij} = s \in \mathbb{R}_{\geq 1},
\]
and let
\[
\prod_{k=1}^{+\infty} b_k
\]
be a rearrangement of the double product corresponding to a map $\sigma : \mathbb{N} \to \mathbb{N} \times \mathbb{N}$.
For $N \in \mathbb{N}$, let
\[
F_N = \{ \sigma(k) \in \mathbb{N} \times \mathbb{N} : 1 \leq k \leq N \},
\]
so that
\[
\prod_{k=1}^{N} b_k = \prod_{F_N} a_{ij}.
\]

Given $\epsilon > 0$, let $F \subset \mathbb{N} \times \mathbb{N}$ be a finite set such that
\[
s - \epsilon < \prod_{F} a_{ij} \leq s,
\]
and let $K \in \mathbb{N}$ be defined by $K = \max \{ \sigma^{-1}(i,j) : (i,j) \in F \}$. 

If $N \geq K$, then $F_{N} \supseteq F$ and since $a_{ij} \geq 1$, we obtain
\[
s - \epsilon < \prod_{F} a_{ij} \leq \prod_{F_N} a_{ij} \leq s.
\]
This implies that
\[
\left| \prod_{k=1}^{N} b_k - s \right| \leq \epsilon, \,\,\,\,\,\,\,\, \forall \, N \geq K.
\]
Thus,
\[
\prod_{k=1}^{+\infty} b_k = \prod_{\mathbb{N} \times \mathbb{N}} a_{ij}.
\]
\end{proof}

The rearrangement of a double infinite product into a single infinite product is one natural way to
interpret a double infinite product in terms of single infinite products. Another way is to use iterated
products of single infinite product.

Given a double product $\prod a_{ij}$, one can define two iterated products, obtained by multiplying first over one index followed by the other:
\[
\prod_{i=1}^{+\infty} \left( \prod_{j=1}^{+\infty} a_{ij} \right) = \lim_{N \to +\infty} \prod_{i=1}^{N} \left( 
\lim_{M \to +\infty} \prod_{j=1}^{M} a_{ij} \right),
\]
\[
\prod_{j=1}^{+\infty} \left( \prod_{i=1}^{+\infty} a_{ij} \right) = \lim_{M \to +\infty} \prod_{j=1}^{M} \left( 
\lim_{N \to +\infty} \prod_{i=1}^{M} a_{ij} \right).
\]

\begin{theorem}\label{iter} A unordered product of real numbers $a_{ij} \geq 1$ converges if and only if either one of the iterated products
\[
\prod_{i=1}^{+\infty} \left( \prod_{j=1}^{+\infty} a_{ij} \right), \,\,\,\,\,\, \prod_{j=1}^{+\infty} \left( \prod_{i=1}^{+\infty} a_{ij} \right)
\]
converges. Moreover, both iterated products converge to the unordered product:
\[
\prod_{\mathbb{N} \times \mathbb{N}} a_{ij} = \prod_{i=1}^{+\infty} \left( \prod_{j=1}^{+\infty} a_{ij} \right) =
\prod_{j=1}^{+\infty} \left( \prod_{i=1}^{+\infty} a_{ij} \right)
\]
\end{theorem}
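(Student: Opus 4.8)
\emph{Proof proposal.} The plan is to reduce the statement to the corresponding fact about double series of nonnegative terms by taking logarithms. Since each $a_{ij}\ge 1$, the numbers $c_{ij}:=\log a_{ij}$ are nonnegative, and $\log$ is an increasing bijection of $\mathbb{R}_{\ge 1}$ onto $[0,+\infty)$ (with $\log(+\infty)=+\infty$), with inverse $\exp$. Because all factors are $\ge 1$, every partial product is nondecreasing in each index, so the nested limits in the definition of the iterated products are limits of nondecreasing sequences and therefore exist in $[1,+\infty]$ and coincide with the corresponding suprema. Passing to logs turns partial products into partial sums, suprema into suprema, and the iterated products into the analogous nested sums; by continuity and monotonicity of $\exp$ one gets $\prod_{\mathbb{N}\times\mathbb{N}}a_{ij}=\exp(S)$ with $S=\sup_{F\in\mathcal{F}}\sum_{F}c_{ij}\in[0,+\infty]$, each inner product equals $\prod_{j}a_{ij}=\exp\bigl(\sum_{j}c_{ij}\bigr)$, and hence $\prod_{i}\bigl(\prod_{j}a_{ij}\bigr)=\exp\bigl(\sum_{i}\sum_{j}c_{ij}\bigr)$, this last product converging precisely when $\sum_{i}\sum_{j}c_{ij}<+\infty$. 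Thus it suffices to show that for $c_{ij}\ge 0$,
\[
S=\sum_{i=1}^{+\infty}\Bigl(\sum_{j=1}^{+\infty}c_{ij}\Bigr)=\sum_{j=1}^{+\infty}\Bigl(\sum_{i=1}^{+\infty}c_{ij}\Bigr)\qquad\text{in }[0,+\infty].
\]

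I would prove this series identity by two inequalities, exactly as in the elementary rearrangement argument for series of nonnegative terms. For ``$S\le\sum_i\sum_j c_{ij}$'': given a finite $F\subset\mathbb{N}\times\mathbb{N}$, choose $N,M$ with $F\subseteq\{1,\dots,N\}\times\{1,\dots,M\}$; then $\sum_{F}c_{ij}\le\sum_{i=1}^{N}\sum_{j=1}^{M}c_{ij}\le\sum_{i=1}^{N}\sum_{j=1}^{+\infty}c_{ij}\le\sum_{i=1}^{+\infty}\sum_{j=1}^{+\infty}c_{ij}$, and taking the supremum over $F$ gives the claim. For the reverse inequality, fix $N$; since $\sum_{j=1}^{+\infty}c_{ij}=\sup_{M}\sum_{j=1}^{M}c_{ij}$ and a finite sum of nondecreasing sequences may be handled with a common index, $\sum_{i=1}^{N}\sum_{j=1}^{+\infty}c_{ij}=\sup_{M}\sum_{i=1}^{N}\sum_{j=1}^{M}c_{ij}$; each $\sum_{i=1}^{N}\sum_{j=1}^{M}c_{ij}$ is a partial sum over the finite set $\{1,\dots,N\}\times\{1,\dots,M\}$, hence $\le S$, so $\sum_{i=1}^{N}\sum_{j=1}^{+\infty}c_{ij}\le S$; letting $N\to+\infty$ yields $\sum_i\sum_j c_{ij}\le S$. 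This proves $S=\sum_i\sum_j c_{ij}$, and the same argument with $i$ and $j$ interchanged gives $S=\sum_j\sum_i c_{ij}$. Exponentiating these equalities and using the dictionary of the first paragraph, one reads off both the equivalence of convergence and the equality of the three products.

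I expect the only genuinely delicate point to be the bookkeeping with the value $+\infty$: one must note that an inner product $\prod_{j}a_{ij}$ is allowed to be $+\infty$ for some $i$, that in that case the iterated product $\prod_i(\prod_j a_{ij})$ diverges --- which is consistent, since then $\sum_{j}c_{ij}=+\infty$ forces $S=+\infty$ --- and that the identity $\exp(\sup_F\sum_F c_{ij})=\sup_F\exp(\sum_F c_{ij})$ and its iterated analogue remain valid in $[1,+\infty]$ by the continuity and monotonicity of $\exp$. Everything else is routine, being the standard argument for nonnegative double series transported to the multiplicative setting.
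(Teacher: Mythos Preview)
Your proposal is correct. The route differs from the paper's only in presentation: you pass through logarithms to reduce the question to the standard fact about iterated sums of nonnegative terms, whereas the paper argues directly with products. In both cases the substantive work is the same pair of inequalities --- for one direction, a finite index set $F$ is contained in a rectangle $\{1,\dots,N\}\times\{1,\dots,M\}$, whose partial product (or sum) is bounded by the iterated quantity; for the other, the rectangular partials are themselves partial products over finite sets and hence bounded by the unordered supremum. Your version has the mild advantage of making the treatment of $+\infty$ explicit and of invoking a result the reader may already trust for series; the paper's direct argument avoids the bookkeeping of transporting suprema across $\log$ and $\exp$ and is marginally shorter. Either way the proof is routine once one observes that all partial products are monotone in each index.
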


\begin{proof} Suppose that the unordered product converges. Since $a_{ij} \geq 1$ and $\sup_{I} \left( \sup_{J} c_{ij} \right) =
\sup_{I \times J} c_{ij}$ for each double sequence of reals, we obtain
\[
\prod_{i=1}^{+\infty} \left( \prod_{j=1}^{+\infty} a_{ij} \right) = \sup_{N} \left\{\prod_{i=1}^{N} \left( \sup_{M}\prod_{j=1}^{M} a_{ij} \right) \right\}
= \sup_{(N, M) \in \mathbb{N} \times \mathbb{N}} \left\{ \prod_{i=1}^{N} \prod_{j=1}^{M} a_{ij} \right\} \leq 
\prod_{\mathbb{N} \times \mathbb{N}} a_{ij}.
\]

Conversely, suppose that one of the iterated products exists. Without loss of generality, we suppose that
\[
\prod_{i=1}^{+\infty} \left( \prod_{j=1}^{+\infty} a_{ij} \right) < +\infty.
\]
Let $F \subset \mathbb{N} \times \mathbb{N}$ be a finite subset. Then, there exist two natural numbers $N$ and $M$ such that
\[
F \subset \{1, 2, ..., N \} \times \{ 1, 2, ..., M \} =: R.
\]
So that,
\[
\prod_{F} a_{ij} \leq \prod_{R} a_{ij} = \prod_{i=1}^{N} \left( \prod_{j=1}^{M} a_{ij} \right) \leq 
\prod_{i=1}^{+\infty} \left( \prod_{j=1}^{+\infty} a_{ij} \right).
\]
Therefore, the unordered product converges and
\[
\prod_{\mathbb{N} \times \mathbb{N}} a_{ij} \leq \prod_{i=1}^{+\infty} \left( \prod_{j=1}^{+\infty} a_{ij} \right).
\]
\end{proof}


\section{Main Results}

To construct our multiplicative measure $\mu$, we consider the positive real axis $\mathbb{R}_{>0}$ with the usual topology and for each closed interval $I=[a,b]$ in $\mathbb{R}_{>0}$, we define its "length", $\ell(I)$, by
\[
\ell(I) = b \cdot a^{-1},
\]
where $\cdot$ is the product of $\mathbb{R}$. It is easy to check that

\

1. $\ell(I) \geq 1$ for each closed interval $I = [a,b]$ in $\mathbb{R}_{>0}$.

\

2. If $I$, $J_1$ and $J_2$ are three closed intervals in $\mathbb{R}_{>0}$ such that $I \subseteq J_1 \bigcup J_2$, then 
$\ell(I) \leq \ell(J_1) \cdot \ell(J_2)$.

\

In the sequel, a cover of a set $E \subseteq \mathbb{R}_{>0}$ is a countable collection $S$ of intervals $I =[a,b]$, with 
$0 < a < b < +\infty$, such that $E \subset \bigcup_{I \in S} I$.

\

Now we shall construct the exterior measure of an arbitrary subset $E$ of $\mathbb{R}_{>0}$. Given a set $E \subseteq \mathbb{R}_{>0}$, 
we cover $E$ by a countable collection of intervals $S= \{ I_j \}$ in $\mathbb{R}_{>0}$, and let
\[
\nu(S) = \prod_{I_j \in S} \ell(I_j).
\]
We point out that the value $\nu(S)$ is independent of the rearrangement of the cover $S=\{ I_j \}$ of $E$ (see Theorem \ref{theo rearr}). The exterior measure of $E$, denoted $\mu_{e}(E)$, is defined by
\[
\mu_e (E) = \inf_{S} \nu(S),
\]
where the infimum is taken over all such covers $S$ of $E$. Thus, $1 \leq \mu_e (E) \leq + \infty$. Moreover, if $E_1 \subset E_2 \subset 
\mathbb{R}_{>0}$, then $\mu_e (E_1) \leq \mu_e(E_2)$.

\begin{theorem}\label{intervalo} Let $I =[a,b] \subset\mathbb{R}_{>0}$. Then $\mu_e (I) = \ell(I)$. 
\end{theorem}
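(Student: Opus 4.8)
The plan is to establish the two inequalities $\mu_e(I) \le \ell(I)$ and $\mu_e(I) \ge \ell(I)$ separately. The first is immediate: the one-element collection $S = \{I\}$ is a cover of $I$, so $\mu_e(I) \le \nu(\{I\}) = \ell(I)$. The work is all in the reverse inequality, for which I would fix an arbitrary cover $S = \{I_j\}$ of $I$ and show $\nu(S) = \prod_j \ell(I_j) \ge \ell(I)$. If $\nu(S) = +\infty$ there is nothing to prove, so I may assume $\nu(S) < +\infty$, whence every $\ell(I_j)$ is finite.

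The idea is to pass to a finite subcover by compactness of $I = [a,b]$ after slightly enlarging the $I_j$'s. Given $\epsilon > 0$, write $I_j = [a_j,b_j]$ and set $\epsilon_j = (1+\epsilon)^{2^{-(j+1)}} > 1$, so that $\prod_{j} \epsilon_j^2 = 1+\epsilon$. Enlarge $I_j$ to the open interval $I_j^{*} = (a_j \epsilon_j^{-1}, b_j \epsilon_j)$; its closure $\overline{I_j^{*}} = [a_j\epsilon_j^{-1}, b_j\epsilon_j]$ is a closed interval in $\mathbb{R}_{>0}$ with $\ell(\overline{I_j^{*}}) = \epsilon_j^{2}\, \ell(I_j)$. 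Since $I \subset \bigcup_j I_j \subset \bigcup_j I_j^{*}$ and $I$ is compact, finitely many of the $I_j^{*}$ cover $I$; relabelling, I may assume $I \subset \bigcup_{i=1}^{n}(c_i,d_i)$ where $(c_i,d_i) = I_i^{*}$.

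The core of the proof is then the finite combinatorial estimate $\ell(I) \le \prod_{i=1}^{n}\ell([c_i,d_i])$. I would build a chain of these open intervals: choose one, say $(c_{i_1},d_{i_1})$, containing $a$; if $d_{i_1} \le b$ then $d_{i_1}\in I$, so some $(c_{i_2},d_{i_2})$ contains $d_{i_1}$; iterating produces intervals with $c_{i_1} < a < d_{i_1}$ and $c_{i_{m+1}} < d_{i_m} < d_{i_{m+1}}$. The right endpoints strictly increase, so no interval repeats and the process stops at some $k$ with $d_{i_k} > b$. Telescoping the inequalities $c_{i_{m+1}} < d_{i_m}$ gives
\[
\frac{b}{a} < \frac{d_{i_k}}{c_{i_1}} < \prod_{m=1}^{k}\frac{d_{i_m}}{c_{i_m}} = \prod_{m=1}^{k}\ell([c_{i_m},d_{i_m}]) \le \prod_{i=1}^{n}\ell([c_i,d_i]),
\]
the last step using $\ell \ge 1$ (one could alternatively obtain the same bound by induction on $n$ via property 2 above). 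Combining,
\[
\ell(I) \le \prod_{i=1}^{n}\ell(\overline{I_i^{*}}) = \prod_{i=1}^{n}\epsilon_i^{2}\,\ell(I_i) \le \Big(\prod_{j}\epsilon_j^{2}\Big)\Big(\prod_{j}\ell(I_j)\Big) = (1+\epsilon)\,\nu(S).
\]
Taking the infimum over all covers $S$ and then letting $\epsilon \to 0^{+}$ yields $\ell(I) \le \mu_e(I)$, completing the proof. The main obstacle is the chain/telescoping step together with making the compactness reduction rigorous; the rest is routine manipulation of products.
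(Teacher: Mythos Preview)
Your proposal is correct and follows essentially the same route as the paper: enlarge each $I_j$ multiplicatively to an open interval with controlled increase in $\ell$, apply Heine--Borel to reduce to a finite subcover, use the finite-cover inequality $\ell(I)\le\prod_{j=1}^{N}\ell(I_j^{\ast})$, and let $\epsilon\to 0$. The only cosmetic differences are your choice of enlargement factors $\epsilon_j^{2}=(1+\epsilon)^{2^{-j}}$ versus the paper's telescoping factors $\frac{1+\epsilon^{j}}{1+\epsilon^{j+1}}$, and that you spell out the chain/telescoping argument explicitly where the paper simply invokes (an implicit induction on) property~2.
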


\begin{proof} That $\mu_e(I) \leq \ell(I)$ is obvious. To show the opposite inequality we choose a countable cover $S=\{ I_j \}$ of $I$.
Let $0 < \epsilon < 1$ be fixed. For each $j \in \mathbb{N}$, let $I_{j}^{\ast}$ be an interval of $\mathbb{R}_{>0}$ such that its interior 
$(I_{j}^{\ast})^{\circ} \supset I_j$ and $\ell(I_{j}^{\ast}) \leq \frac{1 + \epsilon^{j}}{1 + \epsilon^{j+1}} \cdot \ell(I_j)$. Then
$I \subset \bigcup_j (I_{j}^{\ast})^{\circ}$. Since $I$ is closed and bounded, by the Heine-Borel Theorem, there exists $N \in \mathbb{N}$ such that $I \subset \bigcup_{j=1}^{N} I_{j}^{\ast}$. Now, the property 2. above implies that 
$\ell(I) \leq \prod_{j=1}^{N} \ell(I_{j}^{\ast})$. Therefore,
\[
\ell(I) \leq \frac{1 + \epsilon}{1 + \epsilon^{N+1}} \prod_{j=1}^{N} \ell(I_j) \leq (1 + \epsilon) \cdot \nu(S).
\]
Then letting $\epsilon \to 0^{+}$, we obtain $\ell(I) \leq \nu(S)$ and hence $\ell(I) \leq \mu_e(I)$.
\end{proof}

\begin{theorem}\label{sub multi} If $E = \bigcup_j E_j$ is a countable union of subsets of $\mathbb{R}_{>0}$, then $$\mu_e (E) \leq \prod_j \mu_e(E_j).$$
\end{theorem}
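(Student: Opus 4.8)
The plan is to imitate the classical proof that Lebesgue outer measure is countably subadditive, with sums replaced by products and the usual ``split the error into a geometric series'' trick replaced by a multiplicative version. First I would dispose of the trivial case: if $\mu_e(E_{j_0}) = +\infty$ for some index $j_0$, then, since every factor $\mu_e(E_j)$ is $\geq 1$, the product $\prod_j \mu_e(E_j)$ equals $+\infty$ and the inequality holds automatically. Hence I may assume $\mu_e(E_j) < +\infty$ for all $j$.

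Next, fix $\epsilon > 0$ and choose positive numbers $\epsilon_j$ with $\prod_{j=1}^{+\infty}(1 + \epsilon_j) \leq 1 + \epsilon$; for instance, since $1 + x \leq e^{x}$ and $\sum_{j\geq 1} 2^{-j} = 1$, the choice $\epsilon_j = 2^{-j}\log(1+\epsilon)$ works. For each $j$, using that $\mu_e(E_j)$ is the infimum of $\nu(S)$ over covers $S$ of $E_j$ and that $\mu_e(E_j) \geq 1 > 0$, I can pick a cover $S_j = \{ I_{j,k} \}_k$ of $E_j$ by intervals in $\mathbb{R}_{>0}$ with
\[
\nu(S_j) = \prod_{k} \ell(I_{j,k}) \leq (1 + \epsilon_j)\,\mu_e(E_j).
\]

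Now I would assemble the countable family $S := \{ I_{j,k} \}_{j,k}$ of all these intervals. Since $E = \bigcup_j E_j \subset \bigcup_{j,k} I_{j,k}$, the family $S$ is a cover of $E$, so $\mu_e(E) \leq \nu(S)$. The crux is to identify $\nu(S)$ with $\prod_j \nu(S_j)$. After discarding repeated intervals (which only decreases $\nu(S)$, all factors being $\geq 1$) and enumerating $S$ as a single sequence (which does not change its value, by Theorem \ref{theo rearr}), the family $S$ is a rearrangement of the doubly-indexed product $\prod_{i,j} \ell(I_{j,k})$; by Theorem \ref{theo rearr 2} its value equals the unordered product $\prod_{\mathbb{N}\times\mathbb{N}} \ell(I_{j,k})$, and by Theorem \ref{iter} this equals the iterated product $\prod_{j=1}^{+\infty}\bigl(\prod_{k=1}^{+\infty}\ell(I_{j,k})\bigr) = \prod_{j=1}^{+\infty} \nu(S_j)$ (if the unordered product diverges, every rearrangement and every iterated product diverges too, and the identity still holds in $[1,+\infty]$). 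Combining, and using that $\prod_j a_j b_j = \bigl(\prod_j a_j\bigr)\bigl(\prod_j b_j\bigr)$ for reals $\geq 1$ (pass to logarithms),
\[
\mu_e(E) \leq \nu(S) = \prod_{j} \nu(S_j) \leq \prod_{j}(1+\epsilon_j)\,\mu_e(E_j) \leq (1+\epsilon)\prod_{j}\mu_e(E_j).
\]
Letting $\epsilon \to 0^{+}$ yields the claim.

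I expect the main obstacle to be precisely the bookkeeping in the third paragraph: justifying rigorously that the single product $\nu(S)$ coming from an arbitrary enumeration of the doubly-indexed family $\{ I_{j,k} \}$ coincides with the iterated product $\prod_j \nu(S_j)$, i.e.\ that one may pass freely between single, double, and iterated infinite products of numbers $\geq 1$. This is exactly the role of Theorems \ref{theo rearr}, \ref{theo rearr 2} and \ref{iter}, so the effort is mostly in invoking them correctly and in handling the degenerate situations (an infinite factor on the right-hand side, repeated intervals among the $S_j$, or the case of a finite cover, where the needed identity is a trivial finite rearrangement).
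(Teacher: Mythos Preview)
Your proposal is correct and follows essentially the same route as the paper's proof: cover each $E_j$ up to a multiplicative error, pool the covers into a single cover of $E$, and invoke Theorems \ref{theo rearr 2} and \ref{iter} to pass from the single-indexed product $\nu(S)$ to the iterated product $\prod_j \nu(S_j)$. The only cosmetic differences are the initial reduction (the paper assumes directly $\prod_j \mu_e(E_j) < +\infty$, which disposes of the divergent case in one stroke) and the choice of error sequence: the paper uses the telescoping factors $\frac{1+\epsilon^{j}}{1+\epsilon^{j+1}}$, whose finite products collapse to $\frac{1+\epsilon}{1+\epsilon^{N+1}}$, whereas you use $\epsilon_j = 2^{-j}\log(1+\epsilon)$ and the bound $1+x \le e^{x}$.
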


\begin{proof} Without loss of generality, we may assume that $\prod_j \mu_e(E_j) < +\infty$. Now, we fix $0 < \epsilon < 1$. 
Given $j \in \mathbb{N}$, we choose intervals $I_{ij}$ of $\mathbb{R}_{>0}$ such that $E_j \subset \bigcup_{i} I_{ij}$ and 
\begin{equation}\label{ineq}
\prod_{i} \ell(I_{ij}) < \frac{1+\epsilon^{j}}{1+ \epsilon^{j+1}}\mu_{e}(E_j).
\end{equation}
Let $\sigma : \mathbb{N} \to \mathbb{N} \times \mathbb{N}$ be a bijection, then the collection $\{ J_k \}$ defined by $J_k = I_{\sigma(k)}$ is a cover of $E$. So that,
\begin{equation}\label{submulti 2} 
\mu_{e}(E) \leq \prod_{k} \ell(J_k) = \prod_{\mathbb{N} \times \mathbb{N}} \ell(I_{ij}) = \prod_j \left( \prod_i \ell(I_{ij}) \right)
\leq (1+ \epsilon) \prod_{j} \mu_e(E_j),
\end{equation}
where the first equality follows from Theorem \ref{theo rearr 2}, the second from Theorem \ref{iter} and the last inequality from 
(\ref{ineq}) and 
\[
\prod_{j=1}^{N} \frac{1+\epsilon^{j}}{1+ \epsilon^{j+1}}\mu_{e}(E_j) = \frac{1+\epsilon}{1+ \epsilon^{N+1}} \prod_{j=1}^{N} \mu_e(E_j) \to
(1+ \epsilon) \prod_{j=1}^{+\infty} \mu_e(E_j).
\]
Finally, the result follows by letting $\epsilon \to 0$ in (\ref{submulti 2}).
\end{proof}

The next theorem relates the exterior measure of an arbitrary set of $\mathbb{R}_{>0}$ with the exterior measure of open sets and 
$G_{\delta}$ sets of $\mathbb{R}_{>0}$. This theorem is an analogous of the theorems 3.6 and 3.8 in \cite{zygmund}, pp. 44.
 
\begin{theorem}\label{G delta} Let $E \subseteq \mathbb{R}_{>0}$, then
\begin{enumerate}
\item[(i)] given $0 < \epsilon < 1$, there exists and open set $G$ such that $E \subset G$ and $\mu_e(G) \leq (1+ \epsilon) \cdot \mu_e(E)$;

\item[(ii)] there exists a set $H$ of type $G_{\delta}$ in $\mathbb{R}_{>0}$ such that $E \subset H$ and $\mu_e(E) = \mu_e(H)$.
\end{enumerate}
\end{theorem}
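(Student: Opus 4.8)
\textbf{Proof proposal for Theorem \ref{G delta}.}

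The plan is to mirror the classical arguments for Lebesgue outer measure (Wheeden--Zygmund, Theorems 3.6 and 3.8), substituting products for sums throughout and using the multiplicative machinery already developed. For part (i), fix $0 < \epsilon < 1$. If $\mu_e(E) = +\infty$ the conclusion is trivial by taking $G = \mathbb{R}_{>0}$, so assume $\mu_e(E) < +\infty$. By definition of the exterior measure there is a cover $S = \{I_j\}$ of $E$ by closed intervals $I_j = [a_j, b_j]$ with $\nu(S) = \prod_j \ell(I_j) \leq (1+\epsilon)^{1/2}\,\mu_e(E)$, say. For each $j$ I would enlarge $I_j$ slightly to an open interval $U_j \subset \mathbb{R}_{>0}$ with $I_j \subset U_j$ and $\ell(\overline{U_j}) \leq \bigl(\tfrac{1+\epsilon^{j}}{1+\epsilon^{j+1}}\bigr)^{1/2}\,\ell(I_j)$ (possible since $\ell$ is continuous in the endpoints and $\mathbb{R}_{>0}$ is open). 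Setting $G = \bigcup_j U_j$, this is open and contains $E$; by monotonicity and Theorem \ref{sub multi} applied to the cover $\{\overline{U_j}\}$ together with Theorem \ref{intervalo},
\[
\mu_e(G) \leq \prod_j \mu_e(\overline{U_j}) = \prod_j \ell(\overline{U_j}) \leq \Bigl(\prod_j \tfrac{1+\epsilon^{j}}{1+\epsilon^{j+1}}\Bigr)^{1/2}\!\!\prod_j \ell(I_j) = (1+\epsilon)^{1/2}\,\nu(S) \leq (1+\epsilon)\,\mu_e(E),
\]
where the telescoping product equals $1+\epsilon$ as computed in the proof of Theorem \ref{sub multi}.

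For part (ii), I would apply (i) with $\epsilon = \epsilon_n := (1 + 1/n)^{1/n} - 1$, or more simply with a sequence $\epsilon_n \to 0^+$, to obtain open sets $G_n \supset E$ with $\mu_e(G_n) \leq (1+\epsilon_n)\,\mu_e(E)$. Set $H = \bigcap_n G_n$, which is a $G_\delta$ set containing $E$. Then $\mu_e(E) \leq \mu_e(H)$ by monotonicity, while $\mu_e(H) \leq \mu_e(G_n) \leq (1+\epsilon_n)\,\mu_e(E)$ for every $n$; letting $n \to \infty$ gives $\mu_e(H) \leq \mu_e(E)$ when $\mu_e(E) < +\infty$. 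If $\mu_e(E) = +\infty$, then $\mu_e(H) \geq \mu_e(E) = +\infty$ forces equality directly, so in that case one may simply take $H = \mathbb{R}_{>0}$ (which is $G_\delta$). This disposes of both cases.

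The main obstacle I anticipate is bookkeeping rather than conceptual: I must be careful that the enlarged intervals $U_j$ stay inside $\mathbb{R}_{>0}$ (so their closures have a well-defined length $\ell$), and that the multiplicative error factors are distributed so their infinite product telescopes to exactly $1+\epsilon$ — this is why I split the single factor $(1+\epsilon)$ into the per-interval factors $\tfrac{1+\epsilon^j}{1+\epsilon^{j+1}}$, exactly as in the proof of Theorem \ref{sub multi}. One must also note that $G$ is a countable union of open intervals hence open, and invoke Theorem \ref{sub multi} (countable sub-multiplicativity) rather than mere finite sub-multiplicativity, since the cover may be genuinely infinite. No new ideas beyond those already in Theorems \ref{intervalo}, \ref{sub multi}, \ref{theo rearr 2} and \ref{iter} are needed.
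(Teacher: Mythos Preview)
Your proof is correct and follows essentially the same route as the paper: both split the factor $1+\epsilon$ as $\sqrt{1+\epsilon}$ for the choice of cover and a telescoping product $\prod_j \sqrt{(1+\epsilon^{j})/(1+\epsilon^{j+1})}$ for the enlargement of each interval, then set $G$ equal to the union of the enlarged open intervals. The paper dismisses (ii) with ``(i) implies (ii)'' and does not separate out the case $\mu_e(E)=+\infty$, but your added detail there is fine and changes nothing substantive.
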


\begin{proof} It is easy to check that $(i)$ implies $(ii)$. To prove $(i)$, we fix $0 < \epsilon < 1$ and choose a cover $\{ I_j \}$ of $E$ in $\mathbb{R}_{>0}$ such that $\prod_j \ell(I_j) \leq \sqrt{(1 + \epsilon)} \cdot \mu_e(E)$. For each $j \in \mathbb{N}$, 
let $I_{j}^{\ast}$ be an interval of $\mathbb{R}_{>0}$ such that its interior $(I_{j}^{\ast})^{\circ} \supset I_j$ and
$\ell(I_{j}^{\ast}) \leq \frac{\sqrt{(1 + \epsilon^{j})}}{\sqrt{(1 + \epsilon^{j+1})}} \cdot \ell(I_j)$. If 
$G = \bigcup_j (I_{j}^{\ast})^{\circ}$, then $G$ is an open set such that $E \subset G$ and
\[
\mu_e(G) \leq \prod_j \ell(I_{j}^{\ast}) \leq \sqrt{(1 + \epsilon)} \cdot \prod_j \ell(I_j) \leq (1+\epsilon) \cdot \mu_{e}(E).
\]
\end{proof}

The following corollary is an immediate consequence of Theorem \ref{G delta}, $(i)$.

\begin{corollary}
Let $E \subset \mathbb{R}_{>0}$. Then $$\mu_e(E) = \inf \mu_e(G),$$ where the infimum is taken over all open sets $G$ of $\mathbb{R}_{>0}$ containing $E$.
\end{corollary}

A set $E \subseteq \mathbb{R}_{>0}$ is said to be \textit{measurable} if given $\epsilon > 0$, there exists an open set $G$ of 
$\mathbb{R}_{>0}$ such that $E \subset G$ and
\[
\mu_e (G \setminus E) < 1 + \epsilon.
\]
If $E \subseteq \mathbb{R}_{>0}$ is measurable, its exterior measure is called its measure, which we denote by $\mu(E)$, i.e.:
\[
\mu(E) = \mu_e(E), \,\,\, \text{for measurable} \,\, E.
\]
We say that a set $E$ is $\mu$-measurable if satisfies the above definition.

\

The next list of examples and properties of measurable sets of $\mathbb{R}_{>0}$ follows from the previous results and the definition of measurability.

\

$1)$ Every open set of $\mathbb{R}_{>0}$ is $\mu$-measurable. For instance, the empty set and the sets $\mathbb{R}_{>0}$ and $(0,1)$ are $\mu$-measurable with $\mu(\emptyset) = 1$, $\mu(\mathbb{R}_{>0}) = +\infty$ and $\mu((0,1)) = +\infty$.

\

$2)$ Every set of exterior measure $1$ is $\mu$-measurable. For instance, every point $\{ a \}$ of $\mathbb{R}_{>0}$ is $\mu$-measurable with
$\mu(\{ a \}) = 1$.

\

The claim $2)$ follows from Theorem \ref{G delta}, $(i)$. 

\

$3)$ If $E = \bigcup_j E_j$ is a countable union of $\mu$-measurable sets of $\mathbb{R}_{>0}$, then $E$ is $\mu$-measurable and
\[
\mu(E) \leq \prod_j \mu(E_j).
\]
Indeed, given $0 < \epsilon < 1$, for each $j=1, 2, ...$, we take an open set $G_j$ such that $E_j \subset G_j$ and
$\mu_e(G_j \setminus E_j) < \frac{1 + \epsilon^{j}}{1 + \epsilon^{j+1}}$. Then $G = \bigcup_j G_j$ is an open such that $E \subset G$,
and since $G \setminus E \subset \bigcup_j (G_j \setminus E_j)$, we have
\[
\mu_e(G \setminus E) \leq \mu_e \left( \bigcup_j (G_j \setminus E_j) \right) \leq \prod_j \mu_e ((G_j \setminus E_j)) \leq (1+ \epsilon),
\]
where the second inequality follows from Theorem \ref{sub multi}. Thus $E$ is $\mu$-measurable and 
$\mu(E) = \mu_e(E) \leq \prod_j \mu_e(E_j) = \prod_j \mu(E_j)$.

\

$4)$ Every interval $I \subset \mathbb{R}_{>0}$ is $\mu$-measurable. If $I$ is bounded and $\inf I > 0$, then $\mu(I) = \ell(I)$; opposite case 
$\mu(I) = +\infty$.

\

The claim $4)$ follows from Theorem \ref{intervalo}, and from $1)$ and $2)$ above.

\

$5)$ If $\{ I_j \}_{j=1}^{N}$ is a finite collection of nonoverlapping intervals of $\mathbb{R}_{>0}$, then $\bigcup_j I_j$ is $\mu$-measurable and 
$\mu\left( \bigcup_j I_j \right) = \prod_j \mu(I_j)$.

\

The claim $5)$ is an extension of Theorem \ref{intervalo}.

\

$6)$ If $E_1$ and $E_2$ are two subsets of $\mathbb{R}_{>0}$ such that 
\[
d(E_1, E_2) := \inf \{ |x - y| : x \in E_1, y \in E_2 \} > 0,
\]
then $\mu_e(E_1 \bigcup E_2) = \mu_e(E_1) \cdot \mu_e(E_2)$.

\

The proof of the claim $6)$ is similar to that of Lemma 3.16, in \cite{zygmund}, but reformulated to our setting. 

\

$7)$ Every closed set $F$ of $\mathbb{R}_{>0}$ is $\mu$-measurable.

\

To see $7)$, we assume first that $F$ is a compact set in $\mathbb{R}_{>0}$, so $\inf F > 0$ and hence $\mu_e(F) < +\infty$.  Given $0 < \epsilon < 1$, by Theorem \ref{G delta} statement $(i)$, there exists an open set $G$ such that $F \subset G$ and 
$\mu_e(G) \leq (1+ \epsilon) \cdot \mu_e(F)$. Since $G\setminus F$ is open in $\mathbb{R}_{>0}$, Theorem 1.11 in \cite{zygmund}, implies there exists a collection of nonoverlapping closed intervals $\{ I_j \}$, where $I_j =[a_j, b_j]$ with $0 < a_j < b_j$, such that
$G\setminus F = \bigcup_j I_j$. Thus, $\mu_e (G\setminus F) \leq \prod_j \mu_e(I_j)$. Then, to prove the $\mu$-measurability of $F$ it suffices
to show that $\prod_j \mu_e(I_j) \leq 1 + \epsilon$. We have $G = F \cup (\bigcup I_j) \supset F \cup (\bigcup_{j=1}^{N} I_j)$ for each $N$.
Since $F$ and $\bigcup_{j=1}^{N} I_j$ are disjoint and compact, by $6)$, we obtain
\[
\mu_e(G) \geq \mu_e \left( F \cup (\bigcup_{j=1}^{N} I_j) \right) = \mu_e(F) \cdot \mu_e \left( \bigcup_{j=1}^{N} I_j \right).
\]
By $5)$, we have $\prod_{j=1}^{N} \mu_e(I_j) = \mu_e \left( \bigcup_{j=1}^{N} I_j \right) \leq \displaystyle{\frac{\mu_e(G)}{\mu_e(F)}} \leq 1 + \epsilon$. This proves the result in the case when $F$ is compact.

To end, let $F$ be any closed subset of $\mathbb{R}_{>0}$ and write $F = \bigcup_j F_j$, where 
$F_j = F \cap \{ x > 0 : j^{-1} \leq x \leq j \}$, $j=2, 3, ...$. Since, each $F_j$ is compact and, thus, $\mu$-measurable; then the $\mu$-measurability of $F$ follows from $3)$.

\

$8)$ The complement of a $\mu$-measurable set of $\mathbb{R}_{>0}$ is $\mu$-measurable. 

\

The proof of $8)$ is similar to that of Theorem 3.17 in \cite{zygmund}.

\

$9)$ If $\{ E_j \}$ is a countable collection of $\mu$-measurable sets of $\mathbb{R}_{>0}$, then $\bigcap_j E_j$ is $\mu$-measurable.

\

The proof of $9)$ is similar to that of Theorem 3.18 in \cite{zygmund}.

\

$10)$ A set $E$ of $\mathbb{R}_{>0}$ is $\mu$-measurable if and only if given $\epsilon >0$, there exists a closed set $F \subset E$ such that
$\mu_e(E \setminus F) < 1 + \epsilon$.

\

The proof of $10)$ is similar to that of Lemma 3.22 in \cite{zygmund}.

\

The following result follows from $3)$, $8)$ and $9)$.

\begin{theorem} Let $\mathcal{M}$ be the collection of $\mu$-measurable subsets of $\mathbb{R}_{>0}$. Then, $\mathcal{M}$ is a $\sigma$-algebra.
\end{theorem}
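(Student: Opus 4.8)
The plan is simply to verify the three defining axioms of a $\sigma$-algebra of subsets of $\mathbb{R}_{>0}$ against the properties of the class of $\mu$-measurable sets already established above. Recall that a nonempty collection $\mathcal{M}$ of subsets of $\mathbb{R}_{>0}$ is a $\sigma$-algebra provided: (a) $\mathbb{R}_{>0} \in \mathcal{M}$; (b) $\mathbb{R}_{>0} \setminus E \in \mathcal{M}$ whenever $E \in \mathcal{M}$; and (c) $\bigcup_{j} E_j \in \mathcal{M}$ whenever $\{ E_j \}$ is a countable subfamily of $\mathcal{M}$.

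First I would observe that $\mathbb{R}_{>0}$ is an open subset of itself, so by property $1)$ it is $\mu$-measurable; hence $\mathbb{R}_{>0} \in \mathcal{M}$, which establishes (a) and in particular shows $\mathcal{M} \neq \emptyset$. Next, axiom (b) is precisely property $8)$, that the complement in $\mathbb{R}_{>0}$ of a $\mu$-measurable set is $\mu$-measurable. Finally, axiom (c) is precisely property $3)$, that a countable union of $\mu$-measurable sets is $\mu$-measurable. These three facts exhaust the requirements, so $\mathcal{M}$ is a $\sigma$-algebra. As a remark not strictly needed for the definition, closure under countable intersections follows either from (b) and (c) via De Morgan's laws or directly from property $9)$, and $\emptyset \in \mathcal{M}$ both because it is open (property $1)$) and because $\emptyset = \mathbb{R}_{>0} \setminus \mathbb{R}_{>0}$.

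Since properties $3)$, $8)$, $9)$ have already been proved above (by adapting to this setting the corresponding arguments for the Lebesgue measure in \cite{zygmund}), there is essentially no obstacle left: the only content of the proof is checking that the bookkeeping matches the axioms, which it does. Were one to present the argument without citing the numbered list, the least immediate of the three inputs is property $8)$ (stability under complementation), whose proof rests on the inner-regularity characterization $10)$ together with Theorem \ref{G delta}; that is where any genuine difficulty would reside.
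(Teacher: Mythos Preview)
Your proposal is correct and matches the paper's own argument: the paper simply states that the theorem ``follows from $3)$, $8)$ and $9)$'', which is exactly the bookkeeping you carry out (you invoke $1)$ for $\mathbb{R}_{>0}\in\mathcal{M}$ and observe that $9)$ is redundant given $3)$ and $8)$, a harmless variation). One small inaccuracy in your closing remark: in the paper's development, property $8)$ is proved \emph{before} $10)$ (via the $G_\delta$--plus--null decomposition, using $2)$, $3)$, and $7)$), not by appealing to $10)$.
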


From $1)$ and this theorem we obtain the following corollary.

\begin{corollary} Every Borel set of $\mathbb{R}_{>0}$ is $\mu$-measurable.
\end{corollary}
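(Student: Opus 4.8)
The plan is to invoke the abstract characterization of the Borel $\sigma$-algebra together with the two facts just established. Recall that the Borel $\sigma$-algebra of $\mathbb{R}_{>0}$, which I will denote $\mathcal{B}(\mathbb{R}_{>0})$, is by definition the smallest $\sigma$-algebra of subsets of $\mathbb{R}_{>0}$ containing the usual topology $\tau$; equivalently, it is the intersection of all $\sigma$-algebras on $\mathbb{R}_{>0}$ that contain every open set.

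First I would note that $\mathcal{M}$ is one such $\sigma$-algebra: by the preceding theorem $\mathcal{M}$ is a $\sigma$-algebra, and by property $1)$ every open set of $\mathbb{R}_{>0}$ belongs to $\mathcal{M}$, so $\tau \subseteq \mathcal{M}$. Consequently $\mathcal{M}$ is a member of the family of $\sigma$-algebras whose intersection defines $\mathcal{B}(\mathbb{R}_{>0})$, and therefore $\mathcal{B}(\mathbb{R}_{>0}) \subseteq \mathcal{M}$ by minimality. Since every Borel set of $\mathbb{R}_{>0}$ is, by definition, an element of $\mathcal{B}(\mathbb{R}_{>0})$, it is $\mu$-measurable, which is exactly the assertion.

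If one prefers a more hands-on route, one can instead argue directly from the closure properties: the collection of $\mu$-measurable sets is closed under complementation (property $8)$) and under countable unions (property $3)$), hence also under countable intersections (property $9)$), and it contains every open set; the standard generated-$\sigma$-algebra argument then places the whole Borel hierarchy inside $\mathcal{M}$. There is no real obstacle here, as the statement is a purely formal consequence of the fact that $\mathcal{M}$ is a $\sigma$-algebra containing $\tau$; the only point requiring a little care is to cite the correct definition of $\mathcal{B}(\mathbb{R}_{>0})$ and to have property $1)$ available, both of which are at hand.
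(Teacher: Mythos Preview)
Your argument is correct and is exactly the approach the paper takes: the corollary is stated as an immediate consequence of the theorem that $\mathcal{M}$ is a $\sigma$-algebra together with property~$1)$ (every open set is $\mu$-measurable), which is precisely the minimality argument you spell out.
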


We are now in a position to prove the following theorem.

\begin{theorem} If $\{ E_j \}$ is a countable collection of disjoint $\mu$-measurable subsets of $\mathbb{R}_{>0}$, then
\[
\mu \left( \bigcup_j E_j \right) = \prod_j \mu(E_j).
\]
\end{theorem}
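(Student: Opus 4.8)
The plan is to prove only the inequality $\mu\left(\bigcup_j E_j\right)\ge\prod_j\mu(E_j)$, because the reverse inequality, together with the $\mu$-measurability of the union, is precisely property $3)$. The heart of the matter is the special case in which the sets are bounded away from $0$ and from $\infty$, which I would isolate as a lemma: \emph{if $\{A_j\}$ is a countable family of pairwise disjoint $\mu$-measurable sets each of whose closure is a compact subset of $\mathbb{R}_{>0}$, then $\mu\left(\bigcup_j A_j\right)=\prod_j\mu(A_j)$.} To prove it, fix $0<\epsilon<1$ and, using property $10)$ (with $\epsilon$ there replaced by $\frac{1+\epsilon^{j}}{1+\epsilon^{j+1}}-1>0$), choose for each $j$ a closed set $F_j\subset A_j$ with $\mu_e(A_j\setminus F_j)<\frac{1+\epsilon^{j}}{1+\epsilon^{j+1}}$. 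Since $\overline{A_j}$ is compact and contained in $\mathbb{R}_{>0}$ and $F_j\subset A_j\subset\overline{A_j}$ is closed in $\mathbb{R}_{>0}$, each $F_j$ is compact; hence, for every $N$, the sets $F_1,\dots,F_N$ are pairwise disjoint compact sets and therefore pairwise at positive distance, so property $6)$ applied inductively on $N$ gives $\mu_e\left(\bigcup_{j=1}^{N}F_j\right)=\prod_{j=1}^{N}\mu_e(F_j)$. On the other hand, writing $A_j=F_j\cup(A_j\setminus F_j)$ and using Theorem \ref{sub multi} we get $\mu_e(A_j)\le\mu_e(F_j)\,\mu_e(A_j\setminus F_j)$, so $\mu_e(F_j)\ge\frac{1+\epsilon^{j+1}}{1+\epsilon^{j}}\,\mu_e(A_j)$. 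Combining these with the monotonicity of $\mu_e$ and the telescoping identity $\prod_{j=1}^{N}\frac{1+\epsilon^{j+1}}{1+\epsilon^{j}}=\frac{1+\epsilon^{N+1}}{1+\epsilon}$ gives
\[
\mu\left(\bigcup_j A_j\right)\ \ge\ \mu_e\left(\bigcup_{j=1}^{N}F_j\right)\ =\ \prod_{j=1}^{N}\mu_e(F_j)\ \ge\ \frac{1+\epsilon^{N+1}}{1+\epsilon}\prod_{j=1}^{N}\mu_e(A_j),
\]
and letting first $N\to+\infty$ and then $\epsilon\to0^{+}$ (all quantities living in $[1,+\infty]$) yields $\mu\left(\bigcup_j A_j\right)\ge\prod_j\mu(A_j)$, which with property $3)$ proves the lemma.

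Next I would derive the general statement from the lemma by slicing the $E_j$ into bounded pieces, in the same spirit as the passage from inner regularity to countable additivity in \cite{zygmund}. Fix once and for all a countable partition $\mathbb{R}_{>0}=\bigcup_{n}S_n$ into bounded intervals with positive infimum — for instance the intervals $[n,n+1)$ and $\left[\frac{1}{n+1},\frac{1}{n}\right)$ for $n\ge1$, re-indexed as a single sequence — so that each $\overline{S_n}$ is a compact interval contained in $\mathbb{R}_{>0}$. Given pairwise disjoint $\mu$-measurable sets $\{E_j\}$, set $A_{j,n}=E_j\cap S_n$. Each $A_{j,n}$ is $\mu$-measurable (by $4)$ and $9)$), the family $\{A_{j,n}\}_{(j,n)\in\mathbb{N}\times\mathbb{N}}$ is pairwise disjoint, $\overline{A_{j,n}}\subset\overline{S_n}$ is a compact subset of $\mathbb{R}_{>0}$, and $\bigcup_{j,n}A_{j,n}=\bigcup_j E_j$. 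Enumerating $\mathbb{N}\times\mathbb{N}$ by a bijection, applying the lemma to the resulting sequence, and then invoking Theorem \ref{theo rearr 2} and Theorem \ref{iter} (available since every factor $\mu(A_{j,n})$ is $\ge1$), we obtain
\[
\mu\left(\bigcup_j E_j\right)=\prod_{\mathbb{N}\times\mathbb{N}}\mu(A_{j,n})=\prod_{j}\left(\prod_{n}\mu(A_{j,n})\right).
\]
For each fixed $j$ the sets $\{A_{j,n}\}_{n}$ are pairwise disjoint, $\mu$-measurable, have compact closure in $\mathbb{R}_{>0}$, and satisfy $\bigcup_n A_{j,n}=E_j$; a second application of the lemma gives $\prod_n\mu(A_{j,n})=\mu(E_j)$, and substituting this into the display completes the proof.

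The only real work is the lemma, and inside it the step that forces $F_j$ to be multiplicatively close to $A_j$: this is exactly the role of the geometric weights $\frac{1+\epsilon^{j}}{1+\epsilon^{j+1}}$ — the same device already used in Theorems \ref{intervalo} and \ref{sub multi} — and the point to be careful about is that every quantity that appears stays in $[1,+\infty]$, so that one never subtracts lengths, never divides by a number smaller than $1$, and the successive limits in $N$ and $\epsilon$ are monotone. The reduction step is essentially bookkeeping, but it genuinely depends on the product calculus of Section 2: because all the factors are $\ge1$, the double product $\prod_{\mathbb{N}\times\mathbb{N}}\mu(A_{j,n})$ can be computed indifferently by a single rearrangement or by iteration, with no convergence hypothesis needed beyond what Theorems \ref{theo rearr 2} and \ref{iter} provide — their divergent case being the trivial one in which both sides equal $+\infty$.
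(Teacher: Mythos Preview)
Your proof is correct and follows essentially the same route as the paper's: first the bounded case via inner approximation by compacts using property $10)$, the telescoping weights $\frac{1+\epsilon^{j}}{1+\epsilon^{j+1}}$, and property $6)$ for finitely many disjoint compacta; then the general case by slicing $\mathbb{R}_{>0}$ into bounded pieces with positive infimum and invoking the double-product machinery of Section~2. The only cosmetic differences are your choice of slicing intervals (the paper uses the annuli $J_k=[k^{-1},k]\setminus[(k-1)^{-1},k-1]$) and your more explicit handling of the induction in $6)$ and of the divergent case in Theorems~\ref{theo rearr 2}--\ref{iter}.
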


\begin{proof} First, we assume that each $E_j$ is bounded with $\inf E_j > 0$. Given $0 < \epsilon < 1$ and $j=1, 2,...$, by $10)$, there exists a closed $F_j \subset E_j$ such that $\mu(E_j \setminus F_j) < \frac{1+ \epsilon^{j}}{1+ \epsilon^{j+1}}$. Then,
$\mu (E_j) \leq \frac{1+ \epsilon^{j}}{1+ \epsilon^{j+1}} \cdot \mu(F_j)$. Since the $E_j$ are bounded with $\inf E_j >0$ and disjoint, 
the $F_j$ are compact and disjoint. Then, By $6)$, we have $\mu \left( \bigcup_{j=1}^{N} F_j \right) = \prod_{j=1}^{N} \mu(F_j)$, for each
$N$. So,
\[
\mu \left(\bigcup_j E_j \right) \geq \prod_{j=1}^{N} \mu(F_j) \geq \frac{1 + \epsilon^{N+1}}{1 + \epsilon} \prod_{j=1}^{N} \mu(E_j) \geq
\frac{1}{1 + \epsilon} \prod_{j=1}^{N} \mu(E_j),
\]
for each $N$. By letting $N \to + \infty$ and since $0 < \epsilon < 1$ is arbitrary, we obtain $\mu \left(\bigcup_j E_j \right) \geq 
\prod_{j=1}^{+\infty} \mu(E_j)$. The opposite inequality follows from Theorem \ref{sub multi}. This proves the theorem in the case when
the $E_j$ are bounded with $\inf E_j > 0$.

For the general case, let $I_k = [k^{-1}, k]$, $k =1, 2, ...$, and we define $J_1= I_1$ and $J_k = I_k \setminus I_{k-1}$ for $k \geq 2$.
Then the sets $A_{jk} = E_j \cap J_k$, with $j, k=1,2,...$, are bounded, disjoint, $\mu$-measurable and with $\inf A_{jk} >0$. Since
$E_j = \bigcup_{k} A_{jk}$ and $\bigcup_{j} E_j = \bigcup_{j, k} A_{jk}$, by the case already established and proceeding, with the double product, as in the proof of Theorem \ref{sub multi}, we have
\[
\mu \left( \bigcup_j E_j \right) = \mu \left( \bigcup_{j,k}  A_{jk} \right) = \prod_{\mathbb{N} \times \mathbb{N}} \mu(A_{jk}) =
\prod_{j=1}^{+\infty} \left( \prod_{k=1}^{+\infty} \mu(A_{jk}) \right)= \prod_{j=1}^{+\infty} \mu(E_j).
\]
This completes the proof.
\end{proof}

We also have the Carath\'eodory's characterization for $\mu$-measurable sets of $\mathbb{R}_{>0}$. The proof of the following theorem is similar to that of Theorem 3.30, in \cite{zygmund}, but reformulated to our setting.

\begin{theorem} A set $E$ of $\mathbb{R}_{>0}$ is $\mu$-measurable if and only if for every set $A$ of $\mathbb{R}_{>0}$
\[
\mu_e(A) = \mu_e(A \cap E) \cdot \mu_e(A \setminus E).
\]
\end{theorem}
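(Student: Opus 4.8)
The plan is to adapt the classical proof of the Carath\'eodory characterization (Theorem 3.30 in \cite{zygmund}) to the present multiplicative setting, the only structural novelty being that subadditivity of the exterior measure is replaced by the sub-multiplicativity of Theorem \ref{sub multi}, and that every exterior measure lies in $[1,+\infty]$, so that multiplying two inequalities between quantities $\geq 1$ preserves their direction.

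\emph{Necessity.} Assume $E$ is $\mu$-measurable and fix $A \subseteq \mathbb{R}_{>0}$. By Theorem \ref{G delta}$(ii)$ choose a $G_\delta$ set $H \supseteq A$ with $\mu_e(H) = \mu_e(A)$; since $H$ is Borel it is $\mu$-measurable, hence $H \cap E$ and $H \setminus E$ are disjoint $\mu$-measurable sets whose union is $H$. Countable multiplicativity with two factors gives $\mu_e(A) = \mu_e(H) = \mu(H\cap E)\cdot\mu(H\setminus E) = \mu_e(H\cap E)\cdot\mu_e(H\setminus E)$. Because $A\cap E \subseteq H\cap E$ and $A\setminus E \subseteq H\setminus E$ and every exterior measure is $\geq 1$, monotonicity of $\mu_e$ yields $\mu_e(A) \geq \mu_e(A\cap E)\cdot\mu_e(A\setminus E)$; the reverse inequality is Theorem \ref{sub multi} applied to the pair $\{A\cap E,\ A\setminus E\}$, so equality holds.

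\emph{Sufficiency.} Suppose $\mu_e(A) = \mu_e(A\cap E)\cdot\mu_e(A\setminus E)$ for all $A$. I would first treat the case $\mu_e(E) < +\infty$: pick (Theorem \ref{G delta}$(ii)$) a $G_\delta$ set $H \supseteq E$ with $\mu_e(H) = \mu_e(E)$ and apply the hypothesis to $A = H$; since $E \subseteq H$ this reads $\mu_e(E) = \mu_e(E)\cdot\mu_e(H\setminus E)$, and as $1 \leq \mu_e(E) < +\infty$ we may cancel to get $\mu_e(H\setminus E) = 1$. By property $2)$ the set $H\setminus E$ is $\mu$-measurable, hence so is $E = H\setminus(H\setminus E)$ because $\mathcal{M}$ is a $\sigma$-algebra. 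For the general case I would use the Carath\'eodory-type lemma that the family $\mathcal{C}$ of all sets satisfying the displayed identity is closed under finite intersections: for $E_1, E_2 \in \mathcal{C}$ and any $A$, expanding $\mu_e(A) = \mu_e(A\cap E_1)\cdot\mu_e(A\setminus E_1)$ and then $\mu_e(A\cap E_1) = \mu_e(A\cap E_1\cap E_2)\cdot\mu_e((A\cap E_1)\setminus E_2)$, and combining with the two instances of Theorem \ref{sub multi}, namely $\mu_e(A\setminus(E_1\cap E_2)) \leq \mu_e(A\setminus E_1)\cdot\mu_e((A\cap E_1)\setminus E_2)$ (valid since $A\setminus(E_1\cap E_2)$ is the disjoint union of $A\setminus E_1$ and $(A\cap E_1)\setminus E_2$) and $\mu_e(A) \leq \mu_e(A\cap E_1\cap E_2)\cdot\mu_e(A\setminus(E_1\cap E_2))$, squeezes the chain of inequalities into equalities and gives $E_1\cap E_2 \in \mathcal{C}$. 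Now, given an arbitrary $E \in \mathcal{C}$, put $I_k = [k^{-1}, k]$, $J_1 = I_1$, $J_k = I_k\setminus I_{k-1}$ for $k \geq 2$; each $J_k$ is an interval, hence $\mu$-measurable, hence in $\mathcal{C}$ by the necessity part. Thus $E\cap J_k \in \mathcal{C}$; it is bounded with $\inf(E\cap J_k) \geq k^{-1} > 0$, so $\mu_e(E\cap J_k) \leq \ell(I_k) < +\infty$, and the finite case shows $E\cap J_k$ is $\mu$-measurable; finally $E = \bigcup_k (E\cap J_k)$ is $\mu$-measurable by property $3)$.

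The step I expect to be the main obstacle is the intersection lemma for $\mathcal{C}$: it is the only place where the multiplicative bookkeeping differs substantively from the additive original, and one must verify that the repeated applications of Theorem \ref{sub multi} really do collapse the chain of inequalities to equalities — which works precisely because every factor appearing is $\geq 1$, so that multiplying inequalities of such factors never reverses them.
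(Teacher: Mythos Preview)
Your proof is correct and follows exactly the approach the paper indicates, namely adapting the proof of Theorem~3.30 in \cite{zygmund} to the multiplicative setting by replacing sums with products and subadditivity with Theorem~\ref{sub multi}. One cosmetic slip: for $k\geq 2$ the set $J_k = I_k\setminus I_{k-1}$ is a union of two intervals rather than a single interval, but it is still Borel and hence $\mu$-measurable (so lies in $\mathcal{C}$ by the necessity part), and the argument goes through unchanged.
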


We summarize our main result in the following theorem.

\begin{theorem} Let $\mathbb{R}_{>0}$ with the usual topology. Then there exist a $\sigma$-algebra $\mathcal{M}$ containing every Borel set 
of $\mathbb{R}_{>0}$ and a measure $\mu : \mathcal{M} \to [1, +\infty]$ such that
\begin{enumerate}
\item[(i)] $\mu(\emptyset) = 1$.

\item[(ii)] For each interval $I$ of $\mathbb{R}_{>0}$, we have that $\mu(I) = \ell(I)$ if $I$ is bounded and $\inf I >0$; opposite case $\mu(I)=+\infty$.

\item[(iii)] 
\[
\mu \left( \bigcup_j E_j \right) = \prod_j \mu(E_j)
\]
for every countable collection $\{ E_j \}$ of pairwise disjoint sets of $\mathcal{M}$. We call to this property: 
countable multiplicativity. 
\end{enumerate}
\end{theorem}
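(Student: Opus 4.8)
The plan is to assemble the results already established and to read off the three listed properties from them. First I would let $\mathcal{M}$ be the collection of all $\mu$-measurable subsets of $\mathbb{R}_{>0}$ and let $\mu$ be the restriction of the exterior measure $\mu_e$ to $\mathcal{M}$, that is, $\mu(E) = \mu_e(E)$ for $E \in \mathcal{M}$. That $\mathcal{M}$ is a $\sigma$-algebra was proved above, and that it contains every Borel subset of $\mathbb{R}_{>0}$ is the corollary that follows that fact. The codomain is the asserted one: since $\ell(I) = b \cdot a^{-1} \geq 1$ for every closed interval $I = [a,b] \subset \mathbb{R}_{>0}$, Remark \ref{suc mayor a 1} gives $\nu(S) = \prod_{I_j \in S} \ell(I_j) \geq 1$ for every cover $S$, whence $\mu_e(E) = \inf_S \nu(S) \in [1,+\infty]$ for every $E \subseteq \mathbb{R}_{>0}$; in particular $\mu : \mathcal{M} \to [1,+\infty]$.

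Next I would dispatch the three clauses in turn. For $(i)$: the set $\emptyset$ is open, hence $\mu$-measurable, and for each $\delta > 0$ the single interval $[1,1+\delta]$ is a cover of $\emptyset$, so $1 \leq \mu_e(\emptyset) \leq \ell([1,1+\delta]) = 1+\delta$; letting $\delta \to 0^+$ gives $\mu(\emptyset) = 1$. For $(ii)$: every interval of $\mathbb{R}_{>0}$ is $\mu$-measurable, and the stated values of $\mu$ on intervals are exactly those recorded in the list of examples above, which in turn rest on Theorem \ref{intervalo} together with the measurability of open sets and of sets of exterior measure $1$. For $(iii)$: this is precisely the countable multiplicativity theorem proved just above, applied to an arbitrary countable family of pairwise disjoint members of $\mathcal{M}$.

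At this stage there is essentially no obstacle: the statement is a summary and its proof is one of citation, so the only thing demanding care is matching each clause to the correct earlier result. The genuine difficulty has already been met upstream, in two places: showing that $\mathcal{M}$ is closed under complementation --- the analogue of the corresponding fact in the Lebesgue theory --- and establishing countable multiplicativity, whose proof first treats pairwise disjoint compact sets and then reduces the general case to that one by intersecting the $E_j$ with the exhausting sets built from the intervals $[k^{-1},k]$ and applying Theorems \ref{theo rearr 2} and \ref{iter} to the resulting double product, exactly as in the proof of Theorem \ref{sub multi}. In writing the proof I would simply point to those arguments and conclude.
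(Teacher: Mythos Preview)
Your proposal is correct and matches the paper's own treatment: the paper presents this theorem explicitly as a summary of the preceding results and offers no separate proof, so a proof by citation of the earlier items --- the $\sigma$-algebra theorem and its Borel corollary, properties $1)$, $2)$, and $4)$ for the clauses $(i)$ and $(ii)$, and the countable multiplicativity theorem for $(iii)$ --- is exactly what is intended.
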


By this Theorem, we say that $\mu$ is a \textit{multiplicative measure}. So, we have constructed a multiplicative measure on 
$\mathbb{R}_{>0}$ using only the topology usual on $\mathbb{R}_{>0}$ and its multiplicative structure. 


\

In the next section, we shall describe the connection between our measure $\mu$ and the Lebesgue measure. 

\section{The measure $\lambda$}

Let $\lambda$ be now the (multiplicative) measure defined on the $\sigma$-algebra of Lebesgue measurable subsets of $\mathbb{R}_{>0}$ by

\[
\lambda(E) = \exp\left(\int_{E} \frac{1}{x} \, dx \right).
\]
\\
It is clear that for each interval $I=[a,b] \subset \mathbb{R}_{>0}$, $\mu(I) = \lambda(I)$. So, $\mu(G) = \lambda(G)$ for each open set 
$G \subset \mathbb{R}_{>0}$. If we show that the $\sigma$-algebra $\mathcal{M}$ coincides with the $\sigma$-algebra of the Lebesgue measurable sets of $\mathbb{R}_{>0}$, then it will follow that $\mu(E) = \lambda(E)$ for all $E \in \mathcal{M}$.

From the definition of the $\mu$-measurability follows that $E \in \mathcal{M}$ (or $E$ is \textit{$\mu$-measurable}) if and only if $E = H \setminus U$ where $H$ is of type $G_{\delta}$ and $\mu(U) = 1$. Thus, to prove that $E$ is $\mu$-measurable if and only if $E$ is Lebesgue measurable it suffices to show that

\[
\mu(U) =1  \,\,\,\, \text{if and only if} \,\,\,\,\, \left| U \right|_e = 0,
\]
\\
here $| \cdot |_e$ denote the outer Lebesgue measure. 
Suppose $\mu(U) = 1$. By Theorem \ref{G delta}, claim (i), given $\epsilon > 0$, there exists an open set $G$, such that $U \subset G$ and

\begin{equation}\label{la G}
\mu(G) \leq 1 + \epsilon.
\end{equation}
\\
Theorem 1.1, in \cite{zygmund}, implies there exists a collection of nonoverlapping closed interval $\{ I_j = [a_j, b_j] \}$ such that 
$G = \bigcup_j I_j$. Then, $\log(G) \subset \bigcup_j [\log(a_j), \log(b_j)]$. Since $\lambda(G)=\mu(G)$, we obtain,
by (\ref{la G}), that

\[
| \log(G) |_{e} \leq \sum_{j} [\log(b_j) - \log(a_j) ] = \log\left(\lambda(G)\right) = \log (\mu(G)) \leq 
\log(1+\epsilon).
\]
So, $| \log(G) |_{e} = 0$ and, hence, $| \log(U) |_{e} = 0$. Then, to apply the exponential map on the region $\log(U)$, we have 
$|U|_e = 0$.

Similarly, it is proved that $|U|_e=0$ implies $\mu(U) =1$. Thus, the $\sigma$-algebra $\mathcal{M}$ coincides with the $\sigma$-algebra of the Lebesgue measurable sets of $\mathbb{R}_{>0}$, and $\mu \equiv \lambda$.

\

\end{document}